\newtheorem{prop}[theorem]{Proposition}
\newcommand{\vep}{\varepsilon}
\newcommand{\lt}{\left}
\newcommand{\rt}{\right}
\newcommand{\mf}{\mathfrak}
\newcommand{\re}{\mf{Re}\,}
\newcommand{\im}{\mf{Im}\,}
\newcommand{\res}{\mbox{Res}}
\newcommand{\bal}{\begin{align}}
\newcommand{\bpm}{\begin{pmatrix}}
\newcommand{\epm}{\end{pmatrix}}
\newcommand{\bsm}{\lt(\begin{smallmatrix}}
\newcommand{\esm}{\end{smallmatrix}\rt)}
\newcommand{\beq}{\begin{equation}}
\newcommand{\eeq}{\end{equation}}
\newcommand{\hf}{\frac{1}{2}}
\newcommand{\kt}{\frac{k}{2}}
\newcommand{\kf}{\left(\frac{k}{4} - \hf\rt)}
\newcommand{\mellin}[2]{\int_0^\infty \!\!\! \mathfrak{f} \lt( #1\rt) y^{#2} \dy}
\renewcommand{\sp}{\ensuremath{s^\prime}}
\newcommand{\slfrac}[2]{\left.#1\right/#2}
\newcommand{\N}{\ensuremath{\mathbb{N}}}
\newcommand{\Q}{\ensuremath{\mathbb{Q}}}
\newcommand{\R}{\ensuremath{\mathbb{R}}}
\newcommand{\C}{\ensuremath{\mathbb{C}}}
\newcommand{\f}{\ensuremath{\mathfrak{f \,}}}
\newcommand{\addtoinf}[2]{\sum_{#1=1}^\infty #2}
\newcommand{\addtoinfcond}[3]{\sum_{\substack{#1 = 1\\ \ensuremath{#2}}}^\infty #3}
\newcommand{\addcond}[2]{\sum_{\ensuremath{#1}} \ensuremath{#2}}
\newcommand{\dterm}[3][\ensuremath{s}]{\frac{#2}{{#3}^{#1}}}
\newcommand{\runmod}[2]{\ensuremath{#1 \textrm{ mod } #2 }}
\newcommand{\kron}[2]{\lt( \frac{#1}{#2}\rt)}
\newcommand{\Lam}[4]{\Lambda\lt(\f_{#1}, \frac{#2}{#3} ,#4\rt)}
\newcommand{\dy}{\frac{\mathrm{d}y}{y}}
\begin{document}

\markboth{T. A. Hulse, E. M. K\i ral, C. I. Kuan \& L. Lim}
{The Sign of Fourier Coefficients of Cusp Forms}

%%%%%%%%%%%%%%%%%%%%% Publisher's Area please ignore %%%%%%%%%%%%%%%
%
\catchline{}{}{}{}{}
%
%%%%%%%%%%%%%%%%%%%%%%%%%%%%%%%%%%%%%%%%%%%%%%%%%%%%%%%%%%%%%%%%%%%%

\title{The Sign of Fourier Coefficients of Half-Integral Weight Cusp Forms
}

\author{Thomas A. Hulse, E. Mehmet K\i ral, Chan Ieong Kuan, Li-Mei Lim}

\address{Department of Mathematics, Brown University, 151 Thayer St.\\
Providence, RI 02912, USA \\ 
\email{\{tahulse,e.mehmet,ck9,llim\}@math.brown.edu}}

\maketitle

%\begin{history}
%\received{(Day Month Year)}
%\accepted{(Day Month Year)}
%\comby{xxx}
%\end{history}

\begin{abstract}
From a result of Waldspurger \cite{kz}, it is known that the normalized Fourier coefficients $a(m)$ of a half-integral weight holomorphic cusp eigenform $\f$ are, up to a finite set of factors, one of $\pm \sqrt{L(\frac{1}{2}, f, \chi_m)}$ when $m$ is square-free and $f$ is the integral weight cusp form related to $\f$ by the Shimura correspondence \cite{sh}. In this paper we address a question posed by Kohnen: which square root is $a(m)$?  In particular, if we look at the set of $a(m)$ with $m$ square-free, do these Fourier coefficients change sign infinitely often? By partially analytically continuing a related Dirichlet series, we are able to show that this is so. 
\end{abstract}

%\keywords{cusp form; half-integral weight; sign change}
%
%\ccode{Mathematics Subject Classification 2010: 11F37, 11F30}

\section{Introduction} 

Let $k$ be an odd integer and $\f \in S_{\kt} (\Gamma_0(4))$; that is, a cusp form of half-integral weight $k/2$ and level $4$ as described by Shimura \cite{sh}.
% such that 
%\[
%a(m) = 0 \mbox{ for } (-1)^{\kt - \hf}m \equiv 2,3 \mod 4.
%\]
We will discuss the automorphic properties of $\f$ in more detail in Section \ref{aut3}. Let the Fourier expansion of $\f$ at $\infty$ be
\beq
\f(z) = \sum_{m=1}^\infty a(m) m^{\frac{k}{4} - \hf}e(mz).
\eeq

The Shimura correspondence provides a holomorphic modular form $f$ of weight $k-1$ such that if $\f$ is an eigenform of the Hecke operator $T_{\kt} (p^2)$, then $f$ is an eigenform of $T_{k-1}(p)$ with the same eigenvalue. It was proven in \cite{nw} that the level of $f$ is 2. For the definitions of the Hecke operators in the half-integral weight case, see \cite{sh}. Waldspurger proved in \cite{w} that for any square-free $t$,
%Let $\f$ be a cusp form of half-integral weight and level $4$, as described by \cite{sh}. More precisely, $\f \in S^+_\frac{k}{2}(\Gamma_0(4))$, where $k$ is an odd integer; we will discuss the automorphic properties of $\f$ in more detail in Section \ref{aut3}. Let
%\beq
%\f(z) = \sum_{m=1}^\infty a(m) m^{\frac{k}{4} - \hf}e(mz) 
%\eeq
%be the Fourier expansion of $\f$ at $\infty$, where $e(z) = e^{2\pi i z}$. We can scale $\f$ such that these $a(m)$ are real, and since $\f\in S^+_\frac{k}{2}(\Gamma_0(4))$ we also have that $a(m)=0$ for $m\equiv 2,3 \pmod{4}$. The Shimura correspondence gives us a %holomorphic cusp form $f$ of integral weight $k-1$ such that the $T_{n^2}$ Hecke eigenvalue on $\f$ agrees with the $T_n$ Hecke eigenvalue of $f$. A theorem of Waldspurger\cite{kz} further demonstrates that for a fundamental discriminant $D$ such that $(-1)^{\kt-\hf} %D>0$, and $\chi_D(n)=\lt(\frac{D}{n}\rt)$ the Kronecker symbol 
\beq \label{prop}
a(t)^2 =c\cdot L\lt(\hf,f,\chi_t\rt)
\eeq
where 
\beq \label{chi}
\chi_t(n) = \lt(\frac{(-1)^{\kt - \hf}t}{n}\rt)
\eeq
is the unique real primitive character modulo $t$. The constant $c$ is dependent only on $\f$. Later this result was made explicit by Kohnen and Zagier in the case of $\f \in S^+_{k+\hf}(\Gamma_0(4))$, see \cite{kz} for the definition of the space $S^+_{k+\hf}(\Gamma_0(4))$. For a fundamental discriminant $D$ satisfying
\beq
(-1)^{\kt - \hf} D > 0,
\eeq
they proved that
\beq \label{KohnenZagier}
\frac{a(|D|)^2}{\langle \f,\f\rangle} = \frac{(\frac{k-1}{2}-1)!}{\pi^{(k-1)/2}}\frac{L(\hf,f,\chi_D)}{\langle f,f \rangle}.
\eeq
Here $\langle \f,\f \rangle$ and $\langle f, f\rangle$ are the normalized Petersson inner products, and
\beq
\chi_D (n) = \kron{D}{n}
\eeq
is the Kronecker symbol. 

The relationship \eqref{prop} prompts the questions posed by Kohnen: which square root of $L\lt(\hf,f,\chi_t\rt)$ is $a(t)$ proportional to, and how often? In \cite{k}, Kohnen in fact proves that for any half-integral weight cusp form $\f\in S_{k+\frac{1}{2}}(N, \chi)$, not necessarily an eigenform, the sequence of Fourier coefficients $a(tn^2)$ for a fixed $t$ square-free has infinitely many sign changes.  A natural next question one may ask is whether all the Fourier coefficents $a(t)$ with $t$ running over square-free integers change sign infinitely often. In the following theorem we prove that this is indeed the case for eigenforms. 
\begin{theorem}\label{mainthm}
Given $\f \in S_\frac{k}{2}(\Gamma_0(4))$, an eigenform of all Hecke operators $T_\kt(p^2)$ for $p$ prime, where $k$ is an odd integer, with Fourier expansion 
\beq
\f(z) = \sum_{m=1}^\infty a(m) m^{\frac{k}{4} - \hf}e(mz); 
\eeq
the Fourier coefficients $a(t)$, with $t$ running over square-free integers, change sign infinitely often.

\end{theorem}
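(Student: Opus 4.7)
The plan is to argue by contradiction using Landau's theorem on Dirichlet series with non-negative coefficients, applied to
\begin{equation*}
D(s) \;=\; \sum_{\substack{t \geq 1 \\ t \text{ squarefree}}} \frac{a(t)}{t^s}.
\end{equation*}
If $a(t)$ were of constant sign for all squarefree $t \geq T_0$, then after replacing $\f$ by $-\f$ if necessary and discarding a finite initial segment we would obtain a Dirichlet series with non-negative coefficients, whose real abscissa of convergence $\sigma_c$ would necessarily be a singularity of its analytic continuation. From Waldspurger's formula \eqref{prop} and the Phragm\'en--Lindel\"of convexity bound for $L\lt(\hf,f,\chi_t\rt)$ one has $|a(t)| \ll t^{1/4+\epsilon}$, so $\sigma_c \leq 5/4$. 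The strategy is therefore to exhibit a \emph{partial} analytic continuation of $D(s)$ across some real point $\sigma_0 < 5/4$ at which it is regular; this would contradict Landau's theorem and force infinitely many sign changes.

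To construct the continuation I would start from the fact that $L(\f,s) = \sum_{m \geq 1} a(m) m^{-s}$ is entire --- a standard consequence of the cuspidality of $\f$ via its Mellin transform --- and isolate squarefree indices through $\mu^2(t) = \sum_{d^2 \mid t} \mu(d)$:
\begin{equation*}
D(s) \;=\; \sum_{d=1}^\infty \frac{\mu(d)}{d^{2s}} \sum_{n=1}^\infty \frac{a(d^2 n)}{n^{s}}.
\end{equation*}
For each $d$, the inner sum decomposes further over the squarefree core $t$ of $d^2 n$, and the resulting sums $\sum_{m\geq 1} a(tm^2) m^{-s}$ can, by the Shimura--Hecke theory available under our eigenform hypothesis, be expressed as $a(t)$ times a ratio of a twisted $L$-function $L(s, f, \chi_t)$ of the Shimura lift $f$ and a factor of $\zeta(2s)$, up to finitely many bad Euler factors. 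Substituting and reorganising expresses $D(s)$ in terms of objects (twisted $L$-functions of $f$, the Riemann zeta function) that have meromorphic continuation and polynomial growth on vertical strips, so that $D(s)$ inherits a meromorphic continuation leftward of $\re s = 5/4$ with controllable singularities coming only from the zeros of $\zeta(2s)$.

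The principal obstacle is the rigorous execution of this continuation uniformly in the summation variables. The twist $\chi_t$ depends on $t$, preventing the double sum from collapsing into a single Euler product and forcing a uniform analysis of $L(s,f,\chi_t)$ in both $s$ and $t$; the denominator $\zeta(2s)$ further restricts how far left one can push, so only a partial continuation avoiding a neighbourhood of the non-trivial zeros is available. Verifying that this partial continuation crosses a real $\sigma_0 < 5/4$ at which $D(s)$ is regular --- the dominant $d=1$ contribution being essentially the entire function $L(\f,s)$ itself --- then completes the contradiction with Landau's theorem and establishes the infinitude of sign changes.
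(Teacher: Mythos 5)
There is a genuine gap in the logical skeleton of your proposal. Landau's theorem, applied to a Dirichlet series with eventually non-negative coefficients, says that the real point of the abscissa of convergence is a singularity of the function; if you then exhibit a continuation that is regular on the real axis down to some $\sigma_0$, the only conclusion is that the abscissa of convergence is at most $\sigma_0$, i.e.\ you obtain the \emph{upper} bound $\sum_{t\le x} a(t) \ll x^{\sigma_0+\vep}$. By itself this contradicts nothing: you have produced no lower bound on the abscissa of convergence, and a priori the $a(t)$ could simply all be small. The paper supplies exactly this missing lower bound, and it is an essential half of the proof. The Rankin--Selberg integral shows $L^{(2)}(\f,s)=\sum_m a(m)^2 m^{-s}$ has a pole at $s=1$; the Shimura relation gives $a(tn^2)\ll |a(t)|\,n^{\vep}$, which transfers the pole to $\sum_{t\ \mathrm{sqfree}} a(t)^2 t^{-s}$ and forces $\sum_{T<t\le x_0} a(t)^2 \ge \alpha x_0^{c}$ for $c$ arbitrarily close to $1$ along a sequence $x_0\to\infty$; finally Iwaniec's pointwise bound $a(n)\ll n^{3/14}$ together with the assumed non-negativity gives $\sum_{T<t\le x_0} a(t) \ge x_0^{-3/14}\sum_{T<t\le x_0} a(t)^2 \gg x_0^{c-3/14}$, which contradicts the upper bound $x_0^{3/4+\vep}$ because $1-3/14=11/14>3/4$. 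Note that the convexity-type bound $|a(t)|\ll t^{1/4+\vep}$ you quote would \emph{not} suffice at this step, since $1-1/4=3/4$ exactly; a subconvex exponent $\theta<1/4$ is indispensable to close the numerology.

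Your proposed mechanism for the continuation is also not the one that works, and the obstacle you yourself flag is fatal as stated: rewriting $\sum_n a(d^2n)n^{-s}$ via the Shimura lift reintroduces a sum over square-free cores $t$ of $a(t)$ against the $t$-dependent values $L(s,f,\chi_t)$ --- a double Dirichlet series at least as hard as the object you started with --- and after this reorganisation the $d=1$ term is not ``essentially $L(\f,s)$.'' The paper avoids multiplicative twists altogether: $D_r(s)=\sum_{r^2\mid m}a(m)m^{-s}$ is detected by additive characters modulo $r^2$ and is therefore a finite linear combination of the entire additive twists $\Lambda(\f,u/d,s)$ with $d\mid r^2$; explicit functional equations for these (one for each of the three cusp classes of $u/d$, obtained from matrix decompositions and the $\Gamma_0(4)$ transformation law) give $D_r(-\vep+i\tau)\ll (1+|\tau|)^{1+2\vep}r^{2+5\vep}$, and Phragm\'en--Lindel\"of against $D_r(1+\vep+i\tau)\ll r^{-2}$ yields $D_r(3/4+\vep+i\tau)\ll r^{-1-4\vep}$, which is summable over $r$ and gives the continuation of $M(s)$ to $\Re(s)>3/4$ with polynomial growth. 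To salvage your outline you would need both (i) to replace the bare appeal to Landau by the quantitative upper bound it implies, paired with the Rankin--Selberg lower bound and a subconvex coefficient bound, and (ii) to replace the multiplicative-twist continuation by one, such as the additive-twist decomposition, that is actually uniform and summable in the outer variable.
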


Inspired by the methods in \cite{gh} and \cite{hr}, we will prove Theorem \ref{mainthm} by analytically continuing the Dirichlet series 
\beq \label{sqfree}
\sum_{\substack{t\geq 1\\t\textrm{ square-free}}}\frac{a(t)}{t^s}
\eeq
to $\Re(s)>3/4$ by exploiting the analytic continuations of a family of Mellin transforms related to $\f$; we then prove our claim by contradiction. 

\section{Automorphic Properties}\label{aut3}

Before we proceed, we will review the automorphic properties of half-integral weight cusp forms. Let $\f$ be as above. Given $\gamma=\bsm a& b \\ c& d\esm \in \Gamma_0(4)$, we have

\beq 
j(\gamma,z) := \vep_d^{-1} \kron{c}{d}(cz+d)^{\hf}  = \theta(\gamma(z))/\theta(z)
\eeq
where $\kron{c}{d}$ is Shimura's extension of the Jacobi symbol as in \cite{sh}.  Setting $\xi := \lt(\gamma, j(\gamma, z) \rt)$, $\f$ satisfies
\beq 
\f|_{\lt[\xi \rt]_k}(z) := j(\gamma,z)^{-k} \f(z) = \vep_d^k \kron{c}{d}(cz+d)^{-\kt}\f\lt( \bpm a & b \\ c & d  \epm z \rt)= \f(z). \label{modular1}
\eeq
Here $\vep_d$ is the sign of the Gaussian sum $\Sigma_{n = 1}^d e(\frac{n^2}{d})$: 
\beq
\vep_d = \lt\{ \begin{array}{ll}
1 & \mbox{ if } d \equiv 1 \pmod{4} \\
i  & \mbox{ if } d \equiv 3 \pmod{4}.
 \end{array}\rt.
\eeq
Furthermore, we fix the following expressions for $\f_\hf$ and $\f_0$, which are evaluations of $\f$ at the respective cusps $\hf$ and $0$, as
\begin{align}
\f_{\hf}(z) &:= \f|_{\lt[\bsm 1 & 0 \\ -2 & 1 \esm \rt]_k}(z) =(-2z+1)^{-\kt}\f\lt( \bpm 1 & 0 \\ -2 & 1 \epm z \rt)\label{modular2}\\
\f_{0}(z) &:= \f|_{\lt[\bsm 0 & -1 \\ 4 & 0 \esm \rt]_k}(z) =(-2iz)^{-\kt}\f\lt( \bpm 0 & -1 \\ 4 & 0 \epm z \rt).
\label{modular3}
\end{align}
Also note that $\f\lt(\bsm r & 0 \\ 0 & r \esm z\rt)=\f(z)$ for all $r\in\R^*$.

\section{Arguing by Contradiction}
Our proof by contradiction proceeds as follows. Take the Dirichlet series 
\beq
M(s)=\sum_{\substack{t\geq 1\\t\textrm{ square-free}}}\frac{a(t)}{t^s}
\eeq
as described in \eqref{sqfree} and assume that $a(t)$ changes sign finitely many times. Assume for a contradiction that $a(t)\geq 0$ for $t > T$ where $T$ is sufficiently large.  Throughout this section, we let $t$ denote a square-free positive integer.

Suppose that $M(s)$ analytically continues to $\Re(s)>\frac{3}{4}$ with polynomial growth in $\Im(s)$, as this work will demonstrate. Using a well-known inverse Mellin transform, we get 
\begin{align} \label{mel1}
\frac{1}{2\pi i}\int_{2-i\infty}^{2+i\infty}M(s)\Gamma(s)x^s\ ds &= \sum_{t} a(t)e^{-t/x}.
\end{align} 
The integral on the left-hand side above is $O(x^{3/4+\varepsilon})$ for any $\vep>0$, as we can move the line of integration to $\Re(s)=3/4+\varepsilon$.  On this vertical line, the gamma function decreases exponentially, whereas the analytic continuation of $M(s)$ only has polynomial growth, as will be shown below in Proposition \ref{analcont}. Since the integrand has no poles for $\Re(s)>3/4$, we don't pick up any residues in moving the line of integration.  Thus we arrive at the inequality
\beq
\label{upperbound}
\sum_{t} a(t) e^{-t/x} \ll x^{3/4+\vep}.
\eeq

The completed Eisenstein series for level 4 is
\beq
E^*(z,s)=2^{2s-1}\zeta^*(2s)E(z,s) =2^{2s-1} \zeta^*(2s)\sum_{\gamma \in \Gamma_\infty \backslash \Gamma_0(4)} \Im(\gamma z)^s,
\eeq 
where $\zeta^*(s)$ is given by
\beq
\zeta^*(s)=\pi^{-s/2}\Gamma(s/2)\zeta(s).
\eeq
The completed Eisenstein series is holomorphic except for simple poles at $s = 0,1$, with residues $ -\frac{1}{4}$ and $\frac{1}{4}$ respectively. We have the identity
\beq
 \iint_{\Gamma_0(4) \backslash \mathfrak{h}} |\f(z)|^2 E^*(z,s) y^{k/2} \frac{\mathrm{d}x \mathrm{d}y}{y^2} = \Gamma\lt(s + \kt -1\rt) 2^{1-k}\pi^{-(s + \kt -1)} \zeta^*(2s) L^{(2)}(\f, s), 
\eeq
where
\beq
L^{(2)}(\f, s)=\sum_{m=1}^\infty \frac{a(m)^2}{m^s}
\eeq
which follows after a Rankin-Selberg unfolding. This implies that $L^{(2)}(\f, s)$ has a pole at $s = 1$ with a non-zero residue.  In fact, due to the integral representation above, the Rankin-Selberg convolution $L$-series extends to a meromorphic function with the only pole at $s = 1$ when $\re(s)\geq \hf$.  

Considering the inverse Mellin transform
\beq \label{biggerx}
I=\frac{1}{2\pi i}\int_{2 - i \infty}^{2 + i \infty} L^{(2)}(\f, s)  \Gamma(s) x^s \mathrm{d}s =\sum_{m} a(m)^2e^{-m/x},
\eeq
and shifting the line of integration to $\Re(s) = \hf$ past the pole at $s=1$, we get 
\beq
I=(\res_{s=1}L^{(2)}(\f, s))x+\frac{1}{2\pi i}\int_{\hf-i\infty}^{\hf+i\infty} L^{(2)}(\f, s)  \Gamma(s) x^s \mathrm{d}s,
\eeq
which implies, as the contribution from the integral above is $O(x^{\hf})$, that
\beq
x \ll \sum_{m} a(m)^2e^{-m/x}.
\eeq

In the above sum, the square-free integers play a nontrivial role.
%In fact we will show the inequality 
%\[
% x \ll \sum_{t} a(t)^2e^{-t/x}.
% \] 
Indeed, using Lemma \ref{bound} we will be able to conclude that for any $\vep >0$ and $\Re(s) = \sigma$

\bal \notag
L^{(2)}(\mf{f},s)&=\sum_{m=1}^\infty \frac{a(m)^2}{m^s} = \sum_t \sum_{n=1}^\infty \frac{a(tn^2)^2}{(tn^2)^s} \\
& \ll
\sum_t \sum_{n=1}^\infty \frac{a(t)^2}{t^\sigma n^{2\sigma - 2\vep}} = \zeta(2\sigma - 2\vep) M^{(2)}(\sigma) \label{3.5}
\end{align}
where
\beq
M^{(2)}(s)=\sum_t \frac{a(t)^2}{t^s}.
\eeq
Therefore, as we move $\sigma$ towards $1^+$ along the real line, since $L^{(2)}(\f,s)$ has a pole at $1$ and $\zeta(2\sigma-2\vep)\ll 1$ for $\sigma\geq 1$, it follows from \eqref{3.5} that $\sum_t \slfrac{a(t)^2}{t^\sigma}$ tends to infinity. Therefore the function $M^{(2)}(s)$, which is an analytic function on the region $\Re(s) >1$, has a singularity as $s$ tends to $1$. Put 
\beq
A(x) = \sum_{T < t \leq x} a(t)^2.
\eeq
We claim that the singularity of $M^{(2)}(s)$ forces the partial sums, $A(x)$, to not be of slow growth. Indeed, assume that for some $c < 1$,
\beq
A(x) = O(x^c),
\eeq
from the partial summation formula we get, for $\Re(s)>1$
\beq
\sum_{t >T} \frac{a(t)^2}{t^s} = s \int_T^\infty \frac{A(u)}{u^{s+1}} \mathrm{d}u.
\eeq
Then, since we assumed $A(x)\ll x^c$, the right-hand side above is an analytic function on the right half plane $\Re(s) > c$, but the left-hand side $M^{(2)}(s)$ has a singularity at $s=1$, giving our contradiction. Thus, for every $c$ with $0<c<1$, every constant $\alpha>0$, and every $x$, there is an $x_0> x$ such that 
\beq \label{channotbigoh}
A(x_0) \geq \alpha x_0^c.
\eeq

Now from $\eqref{upperbound}$ we have a constant $\beta>0$ such that,
\begin{align}
&\beta x^{3/4 +\vep} \geq  e \lt| \sum a(t)e^{-t/x}\rt| \geq e\lt| \sum_{t > T} a(t)e^{-t/x} \rt|  - e\lt| \sum_{t \leq T} a(t)e^{-t/x} \rt|, 
\end{align}
so using our assumption on the eventual non-negativity of $a(t)$, we have that
\bal
& \beta x^{3/4 + \vep} + O(1) \geq e \sum_{t > T} a(t)e^{-t/x} \geq \sum_{T< t < x} a(t),
\end{align}
where $\vep >0$ is arbitrarily small, and $\beta$ depends on $\vep$. Thus increasing $\beta$ to accommodate the constant term, we get
\beq
\beta x^{3/4 + \vep} \geq \sum_{T< t < x} a(t).
\eeq

Let $\lambda n^\theta$ be an upper bound on the individual coefficient $a(n)$ of the half-integral weight modular form $\f$; according to \cite{iw} one may take $\theta = 3/14.$ Now apply \eqref{channotbigoh} and get that for some $x_0$ as above, which we may choose to be arbitrarily large,
\beq \label{ineqtrain}
\lambda \beta x_0^{3/4 + \vep} \geq \lambda\sum_{T < t\leq x_0} a(t) \geq \sum_{T < t \leq x_0} \frac{a(t)^2}{t^\theta} \geq x_0^{-\theta} \sum_{T< t \leq x_0} a(t)^2 \geq \alpha x_0^{-\theta + c},
\eeq
again by our non-negativity assumption. This implies,
\beq
x_0^{c - \vep -27/28} \leq \frac{\lambda \beta}{\alpha}.
\eeq

By chosing $c$ and $\vep$ appropriately we may make the exponent on the left hand side greater than $0$, giving our contradiction. Therefore, the assumption that all Fourier coefficients $a(t)$ change sign finitely many times for square-free $t$ must be false. Thus, in order to show that the Fourier coefficients $a(t)$ change sign infinitely often for square-free $t$, we need only show that $M(s)$ can be analytically continued up to the line $\Re(s) = 3/4$, and grows slowly on vertical strips.  The remainder of this paper is devoted to proving this.

\section{Analytic Continuation}

We now proceed to obtain an analytic continuation of the Dirichlet series \eqref{sqfree} to the region $\Re (s) > 3/4$. First, note that 

\beq \label{first}
\sum_{\substack{t\geq 1\\t\textrm{ square-free}}}\frac{a(t)}{t^s} 
= \addtoinf{m}{\frac{a(m)}{m^s}} \addcond{r^2|m}{\mu(r)}
= \addtoinf{r}{\mu(r)} D_r(s),
\eeq
where 
\beq
D_r(s)=\addtoinfcond{m}{m \equiv 0 \textrm{ mod } r^2}{\frac{a(m)}{m^s}}.
\eeq

Lemma \ref{oneoverrsq} shows that the series $D_r(s)$ converges for $\Re(s) > 1$, and in fact $D_r(1 + \vep + it) \ll_\vep 1/r^2$. With this fact, we easily see that our Dirichlet series over the square-free integers converges on the half-plane $\Re(s) > 1$. We now further examine the series $D_r(s)$:

\begin{align}
D_r(s) &= \addtoinfcond{m}{m \equiv 0 \textrm{ mod } r^2}{\frac{a(m)}{m^s}} = \addtoinf{m}{\dterm{a(m)}{m}} \left( \frac{1}{r^2} \!\! \addcond{u \textrm{ mod } r^2}{e\lt(\frac{mu}{r^2}\rt)} \rt) \notag \\
&= \dterm[2]{1}{r} \addcond{ \runmod{u}{r^2}}{} \addtoinf{m}{\dterm{a(m)e(\frac{mu}{r^2})}{m}}. 
\end{align}

The innermost Dirichlet series can be expressed in terms of an additively twisted Mellin integral of \f. For rational $q$, denote 
\beq
\Lambda(\f_*,q,s) = \mellin{iy + q}{s + \kf}.
\eeq
This integral converges for every $s \in \C$ because $q \in \Q$ is a cusp and since $\f$ is a cusp form, $\f(iy + q)$ decreases exponentially as $y \to \infty$ and as $y \to 0$. Thus $\Lambda(\f, q, s)$ is an entire function of $s$. Let $\f_*$ denote $\f,\f_\hf,$ or $\f_0$. Now expanding $\f_*$ in the integral as its Fourier series, with respective coefficients $a_*(n)$, we get:
\begin{eqnarray}
\int_0^\infty \f_*(iy+q)y^{s+\kf}\dy &=& \int_0^\infty \addtoinf{m}{a_*(m)m^{\kf} e(m(iy + q)) y^{s + \kf} \dy}\notag \\
&=& \addtoinf{m}{a_*(m)m^{\kf}e(m q)} \int_0^\infty \!\!\! e^{- 2\pi m y} y^{s + \kf} \dy \notag\\
&=& \frac{ \Gamma(s + \lt(\tfrac{k}{4} - \tfrac{1}{2}\rt))}{(2\pi)^{s + \kf}} \addtoinf{m} \frac{a_*(m)e(m q)}{m^s} .\label{polyb}
\end{eqnarray}

For ease of notation call, from now on, $\sp : = s + \kf$. Using the integral representation of our Dirichlet series, which is that of $L(\f, s)$ twisted by an additive character, we obtain
 
\begin{align} \label{main}
D_r(s) &=  \addtoinfcond{m}{m \equiv 0 \textrm{ mod } r^2}{\frac{a(m)}{m^s}} \notag\\
&= \frac{(2\pi)^{\sp}}{\Gamma(\sp)}\frac{1}{r^2} \sum_{\runmod{u}{r^2}} \Lam{}{u}{r^2}{s} \notag \\
&= \frac{(2\pi)^{\sp}}{\Gamma(\sp)} \frac{1}{r^2} \addcond{d|r^2}{} \addcond{\substack{(u,d)=1\\ \runmod{u}{d}}}{ \Lam{}{u}{d}{s}}, 
\end{align}
where the fraction $u/d$ is in lowest terms, and by abuse of notation, we continue to call the numerator $u$. Equation \eqref{main} allows us to express $D_r(s)$ as a finite sum of entire functions, hence $D_r(s)$ itself is an entire function. Therefore it makes sense to talk about the growth properties of $D_r(s)$ in $r$ for any fixed $s$ on the complex plane. Also note that we only need to estimate $D_r(s)$ for square-free $r$, due to the existence of $\mu(r)$ in \eqref{first}.

\begin{lemma}\label{bound}
The Fourier coefficients of a half-integral weight cusp eigenform $\f\in S_\frac{k}{2}(\Gamma_0(4))$, with $k \geq 5$ as above satisfy the following bound
\beq
a(tn^2) \ll |a(t)|n^\vep
\eeq
where $t,n \in \N$ and $t$ is square-free, with the implied constant dependent only on $\f$ and $\vep>0$.
\end{lemma}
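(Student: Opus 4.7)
The plan is to reduce the bound to Deligne's estimate for the Shimura lift $f$ of $\mathfrak{f}$, via Shimura's multiplicative identity expressing $a(tn^2)$ as $a(t)$ times a divisor sum in the Hecke eigenvalues of $f$. Since $\mathfrak{f}$ is an eigenform of every $T_{k/2}(p^2)$ with the same eigenvalue as $f$ under $T_{k-1}(p)$ (as recalled in the introduction), Shimura's theorem \cite{sh} gives, for any square-free $t$ and any prime $p$, the local factorization
\begin{align*}
\sum_{r \geq 0} a(tp^{2r})\, X^r \;=\; a(t)\, \frac{1 - \chi_t(p)\, p^{-1/2}\, X}{1 - \lambda_f(p)\, X + X^2}
\end{align*}
in the paper's normalization $a(m) = c(m)\, m^{-(k/4 - 1/2)}$, where $\lambda_f(p)$ is the Hecke eigenvalue of $f$ normalized so that Deligne's bound reads $|\lambda_f(p)| \leq 2$. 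At primes $p \mid t$ the factor $\chi_t(p) = 0$ collapses the numerator to $1$, and at the level prime $p = 2$ the Euler factor has degree one but the same shape. Multiplying the local factors together yields the global identity
\begin{align*}
a(tn^2) \;=\; a(t) \sum_{d \mid n} \mu(d)\, \chi_t(d)\, d^{-1/2}\, \lambda_f(n/d),
\end{align*}
valid (up to harmless adjustments at $p = 2$) for every $n \geq 1$.

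Having isolated $a(t)$ as a pure prefactor, I would then invoke Deligne's bound. Since $f$ is an integral-weight holomorphic cuspidal Hecke eigenform of weight $k - 1 \geq 4$ (using the hypothesis $k \geq 5$), Deligne's theorem gives $|\lambda_f(m)| \leq d(m) \ll_\vep m^\vep$. Inserting this term-by-term and using the divisor bound,
\begin{align*}
|a(tn^2)| \;\leq\; |a(t)| \sum_{d \mid n} d^{-1/2}\, (n/d)^\vep \;\ll_\vep\; |a(t)|\, n^\vep,
\end{align*}
which is the claimed estimate. The case $a(t) = 0$ is automatic: the same generating-function identity forces $a(tn^2) = 0$ for every $n$, and the bound holds trivially.

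The main obstacle is bookkeeping rather than analysis: one must carefully match the paper's normalization with Shimura's original formulation to recover the clean factor $\lambda_f$ with $|\lambda_f(p)| \leq 2$, and verify that the modifications at the level prime $p = 2$ only contribute $O_\vep(n^\vep)$ to the bound. Once the normalizations are aligned, Deligne's theorem combined with the divisor bound immediately closes the proof.
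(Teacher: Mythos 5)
Your proposal is correct and is essentially the paper's own proof: both start from Shimura's identity expressing $\sum_n a(tn^2)n^{-s+k/2-1}$ as $a(t)$ times $L(s,f)$ divided by an $L$-function of $\chi_t$, extract the divisor-sum formula $a(tn^2)=a(t)\sum_{d\mid n}\mu(d)\chi_t(d)d^{-1/2}\lambda_f(n/d)$ (the paper writes this in the unnormalized form with $A(m)$, but the two are identical after the substitution $\lambda_f(m)=A(m)m^{-(k-2)/2}$), and conclude with Deligne's bound and the divisor estimate. The only cosmetic difference is that you work with normalized Hecke eigenvalues and local Euler factors while the paper manipulates the global Dirichlet series directly.
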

\begin{proof}
The Shimura correspondence \cite{sh} gives us that for $t \in \N$ square-free we have that
\beq \label{4.5}
\addtoinf{n} \frac{a(tn^2)}{n^{s-\kt+1}}=a(t)\lt( \addtoinf{m_1}\frac{\chi_t(m_1)\mu(m_1)}{m_1^{s-\kt+\frac{3}{2}}} \rt)\lt(\addtoinf{m_2} \frac{A(m_2)}{m_2^s}\rt),
\eeq
where $\chi_t(m_1)=\kron{-1}{m_1}^{\kt-\hf}\kron{t}{m_1}$ and the $A(m_2)$ are the Fourier coefficients of the weight $k-1$ cusp form $f \in S_{k-1}(\Gamma_0(2))$ such that
\beq
f(z)=\sum_{m_2=1}^\infty A(m_2)e^{2\pi i m_2 z}
\eeq
where $f$ is associated to $\f$ by the Shimura correspondence. Expanding the right-hand side of \eqref{4.5} we get
\bal
\addtoinf{n} \frac{a(tn^2)}{n^{s-\kt+1}} &= a(t) \addtoinf{n} \  \sum_{m_1m_2=n} \frac{\chi_t(m_1)\mu(m_1)}{m_1^{s-\kt+\frac{3}{2}}} \frac{A(m_2)}{m_2^s} \notag \\ 
&= a(t) \addtoinf{n} \sum_{m|n}  \frac{\chi_t\lt(\frac{n}{m}\rt)\mu\lt(\frac{n}{m}\rt)}{\lt(\frac{n}{m}\rt)^{s-\kt+\frac{3}{2}}} \frac{A(m)}{m^s} \notag \\
&=a(t)  \addtoinf{n} \frac{1}{n^{s-\kt+1}} \sum_{m|n}  \frac{\chi_t\lt(\frac{n}{m}\rt)\mu\lt(\frac{n}{m}\rt)}{n^{\hf}} \frac{A(m)}{m^{\kt-\frac{3}{2}}}. 
\end{align}
Comparing coefficients term-by-term we see that for each $n$ 
\bal
a(tn^2)=a(t)\sum_{m|n}  \frac{\chi_t\lt(\frac{n}{m}\rt)\mu\lt(\frac{n}{m}\rt)}{n^{\hf}} \frac{A(m)}{m^{\kt-\frac{3}{2}}}. \label{preb}
\end{align}

Since the Ramanujan-Petersson conjecture is known for integral weight cusp forms, we have $A(m)\ll m^{\frac{(k-1)-1}{2}+\vep}$ with the implied constant dependent on $f$ (and thus $\f$) and $\vep$. Using this bound and taking absolute values of \eqref{preb} we get 
\beq
a(tn^2) \ll |a(t)|n^{-\hf} \sum_{m|n} m^{\hf+\vep} \ll |a(t)|n^{\vep-\hf} \sigma_{\hf}(n). \label{bou}
\eeq
Since  $\sigma_{\hf}(n) \leq d(n)\sqrt{n}$, where $d(n)$ is the divisor function, we have $\sigma_{\hf}(n) \ll n^{1/2+\vep}$ with the implied constant dependent on $\vep$. Putting this into \eqref{bou} gives the desired result.
\end{proof}

\begin{lemma} \label{oneoverrsq}
Letting $r\in\N$ and $\tau \in \R$,
\beq \label{lemma1eq}
D_r(1 + \vep + i\tau)  = \addtoinfcond{m}{m \equiv 0 \textrm{ mod } r^2}{\frac{a(m)}{m^{1+\vep+i\tau}}} \ll \frac{1}{r^2}
\eeq
where the implied constant depends only on $\f$ and $\vep$.  
\end{lemma}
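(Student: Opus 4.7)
The plan is to convert the condition $r^2 \mid m$ into a statement about the square-free decomposition $m = t\ell^2$, then invoke Lemma \ref{bound} to reduce the estimate to a Dirichlet series over square-free integers which is controllable by $L^{(2)}(\f, s)$.

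First I would write each $m$ uniquely as $m = t\ell^2$ with $t$ square-free and observe that $r^2 \mid t\ell^2$ is equivalent to $r \mid \ell$. Indeed, at each prime $p$ with $a = v_p(r) \geq 1$, the inequality $v_p(t) + 2v_p(\ell) \geq 2a$, together with $v_p(t) \in \{0,1\}$ and integrality of $v_p(\ell)$, forces $v_p(\ell) \geq a$. Substituting $\ell = r\ell'$ gives
\[
D_r(s) \;=\; \frac{1}{r^{2s}} \sum_{t} \sum_{\ell' \geq 1} \frac{a\bigl(t(r\ell')^2\bigr)}{(t\ell'^2)^s},
\]
where $t$ ranges over square-free positive integers.

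Next I would apply Lemma \ref{bound} with $n = r\ell'$ to bound $|a(t(r\ell')^2)| \ll_\vep |a(t)|(r\ell')^\vep$. Setting $s = 1 + \vep + i\tau$ and separating variables produces
\[
|D_r(1+\vep+i\tau)| \;\ll_\vep\; \frac{r^\vep}{r^{2(1+\vep)}} \Bigl(\sum_{t} \frac{|a(t)|}{t^{1+\vep}}\Bigr)\Bigl(\sum_{\ell' \geq 1} \frac{1}{\ell'^{\,2+\vep}}\Bigr),
\]
and the $\ell'$-sum converges absolutely.

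For the $t$-sum I would note that, since $L^{(2)}(\f,s) = \sum_m a(m)^2/m^s$ converges for $\Re(s) > 1$, so does its restriction to square-free $t$. Cauchy--Schwarz then yields
\[
\sum_{t} \frac{|a(t)|}{t^{1+\vep}} \;\leq\; \Bigl(\sum_{t} \frac{a(t)^2}{t^{1+\vep}}\Bigr)^{1/2}\Bigl(\sum_{t} \frac{1}{t^{1+\vep}}\Bigr)^{1/2} \;=\; O_\vep(1),
\]
again with $t$ running over square-free integers. Combining everything yields $|D_r(1+\vep+i\tau)| \ll_\vep r^{-2-\vep}$, which is stronger than the claim.

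I expect the main obstacle to be routine bookkeeping: coordinating the $\vep$ in the statement with the $\vep$ produced by Lemma \ref{bound} so that the $\ell'$-sum still converges after the $r^\vep$ factor has been pulled out, and verifying the $p$-adic equivalence $r^2 \mid t\ell^2 \Leftrightarrow r \mid \ell$ carefully in the case that $r$ is not itself square-free. Both steps are routine but warrant explicit attention.
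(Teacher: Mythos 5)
Your proof is correct and follows essentially the same route as the paper's: both decompose $m$ as a square-free part times a square divisible by $r^2$ (your $m = t(r\ell')^2$ is exactly the paper's $m = n_0(dr)^2$), apply Lemma \ref{bound} to reduce to $|a(t)|$ times a small power, pull out the factor $r^{-2s}$, and bound the remaining convergent sums. The only cosmetic difference is that you control $\sum_t |a(t)|/t^{1+\vep}$ by Cauchy--Schwarz against $L^{(2)}(\f,s)$, whereas the paper majorizes it by the full sum $\sum_n |a(n)|/n^{1+\vep}$; both are valid.
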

\begin{proof}
In the sum, we write $m = nr^2$, and let $n_0$ be the square-free part of $n$. Then by Lemma \ref{bound} $a(nr^2) \ll |a(n_0)|\lt(\frac{nr^2}{n_0}\rt)^\vep$, and therefore for $s=\sigma+i\tau$ with $\sigma \geq 1$, 
\beq
\addtoinf{n} \frac{a(nr^2)}{n^{s+2\vep}}  \ll r^{2\vep}\addtoinf{n} \frac{|a(n_0)|(n/n_0)^\vep}{n^{\sigma+2\vep}} \leq r^{2\vep}\addtoinf{n}\frac{|a(n_0)|}{n^{\sigma+\vep}}, \label{imtrappedinatexfactory}
\eeq
where the implied constant only depends on $\f$ and $\vep$. Now, 
\bal
\addtoinf{n}\frac{|a(n_0)|}{n^{\sigma+\vep}} &= \notag  \sum_{d=1}^\infty \sum_{\substack{ n \\ \mbox{\tiny square-free}}} \frac{|a(n)|}{(nd^2)^{\sigma+\vep}}\\
& = \zeta(2\sigma+2\vep)\sum_{\substack{ n \\ \mbox{\tiny square-free}}} \frac{|a(n)|}{n^{\sigma+\vep}}  \ll \zeta(2\sigma+2\vep)\sum_{n=1}^\infty \frac{|a(n)|}{n^{\sigma+\vep}} = O(1) \label{fix}
\end{align}
where the implied constant again depends on $\f$ and $\vep$.  Therefore, using both \eqref{imtrappedinatexfactory} and \eqref{fix} we have
\bal
  D_r(s+\vep) &= \addtoinfcond{m}{m \equiv 0 \textrm{ mod } r^2}\frac{a(m)}{m^{s+\vep}} = \addtoinf{n}\frac{a(nr^2)}{(nr^2)^{s+\vep}} \notag \\
  &= \frac{1}{r^{2s+2\vep}} \addtoinf{n}{\frac{a(nr^2)}{n^{s+\vep}}} \ll \frac{1}{r^{2\sigma}}.
\end{align}
Letting $s = 1+ i\tau$, we have
\beq
D_r(1+\vep+i\tau)  \ll  \frac{1}{r^2}, 
\eeq
completing our proof.
\end{proof}

In order to show that 
\beq M(3/4 + \vep) = \sum \mu(r)D_r(3/4 + \vep)\eeq 
converges, we will bound $D_r(-\vep + it)$ by a polynomial in $t$ and apply a Phragm\'{e}n-Lindel\"{o}f convexity argument. 

The twisted Mellin integrals $\Lambda(\f, \frac{u}{d}, s)$ have functional equations. Depending on the class of equivalent cusps that $\frac{u}{d}$ belongs to, we get slightly different functional equations.  They are as follows:

\begin{lemma}\label{class}
If $4|d$, 
\beq
\Lam{}{u}{d}{s} = d^{1-2s}(-i)^\kt \vep_v^k\kron{d}{v}  \Lam{}{v}{d}{1-s} ,
\eeq
where $v$ is chosen so that $uv \equiv -1 \pmod{d}$. 

If $2|| d$, the functional equation has the type,
\beq
\Lam{}{u}{d}{s} = d^{1-2s}(-i)^\kt \vep_v^k\kron{d}{v} \Lam{\hf}{v}{d}{1-s},
\eeq
where once again $v$ is chosen to satisfy $uv \equiv -1 \pmod{d}$.

Finally, if $2 \nmid d$, then
\beq
\Lam{}{u}{d}{s} = (2d)^{1-2s} \, \vep_d^{-k} \kron{v}{d}  \Lam{0}{v}{d}{1-s}.
\eeq
where $v$ be such that $4uv \equiv -1 \pmod{d}$.
\end{lemma}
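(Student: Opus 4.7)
To prove Lemma \ref{class}, I would use a change of variable $y \mapsto 1/(d^2 y)$ (or $1/(4d^2 y)$ in the odd $d$ case) in the Mellin integral $\Lambda(\f, u/d, s)$, corresponding to the action of an $\SL_2(\Z)$ matrix sending $\infty$ to $u/d$, and then invoke the modularity of $\f$ (respectively, the definitions of $\f_\hf$ and $\f_0$ at the other cusps of $\Gamma_0(4)$). Concretely, given $u,d$ with $\gcd(u,d) = 1$, fix $b, v \in \Z$ with $uv - bd = 1$ and set $\gamma = \bsm u & b \\ d & v \esm \in \SL_2(\Z)$. A direct calculation gives $\gamma\bigl(-v/d + i/(d^2y)\bigr) = u/d + iy$ and $dz_0 + v = i/(dy)$ at $z_0 := -v/d + i/(d^2 y)$; these two identities will drive all three functional equations.

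For $4 \mid d$ the matrix $\gamma$ lies in $\Gamma_0(4)$, so \eqref{modular1} gives $\f(\gamma z_0) = \vep_v^{-k}\kron{d}{v}(dz_0+v)^{k/2}\f(z_0)$ directly. Substituting into $\Lambda(\f, u/d, s)$ and changing variables, the Jacobian produces $d^{1-2s}$ and the weight factor contributes $i^{k/2}$, giving $\Lambda(\f, u/d, s)$ in terms of $\Lambda(\f, -v/d, 1-s)$. Replacing $v$ by $-v$ to put the congruence in the form $uv \equiv -1 \pmod d$ as stated, and then simplifying using $\vep_v\vep_{-v} = i$ for odd $v$ (so $\vep_{-v}^{-k} = (-i)^k \vep_v^k$) together with $\kron{d}{-v} = \kron{d}{v}$ for $d > 0$, consolidates the phase into $(-i)^{k/2}\vep_v^k\kron{d}{v}$ as stated. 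For $2 \| d$ the matrix $\gamma$ is no longer in $\Gamma_0(4)$, but an elementary parity argument (using that $u$, $v$ and $d/2$ are all odd) shows that $\gamma'' := \sigma_\hf^{-1}\gamma$ lies in $\Gamma_0(4)$, where $\sigma_\hf = \bsm 1 & 0 \\ -2 & 1 \esm$. The decomposition $\gamma = \sigma_\hf \gamma''$ combined with the definition \eqref{modular2} of $\f_\hf$ and the slash-operator cocycle yields $\f(\gamma z_0) = (-2\gamma''z_0+1)^{k/2}\f_\hf(\gamma''z_0)$, and one computes directly $-2\gamma''z_0 + 1 = (dz_0+v)/((2u+d)z_0+(2b+v))$. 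After the change of variable, the contribution from the $\sigma_\hf$-factor combines with $d^{1-2s}$ and $i^{k/2}$ to produce the stated multiplier, while $\f_\hf(\gamma''z_0)$ contributes $\Lambda(\f_\hf, v/d, 1-s)$ after reparameterising by the imaginary part of $\gamma''z_0$. The case $2 \nmid d$ proceeds analogously using $\sigma_0 = \bsm 0 & -1 \\ 4 & 0 \esm$ and \eqref{modular3}; because $\det\sigma_0 = 4$, the effective scaling doubles, producing the factor $(2d)^{1-2s}$ together with the shifted congruence $4uv \equiv -1 \pmod d$, and the Kronecker symbol appears as $\kron{v}{d}$ via quadratic reciprocity since $d$ is odd.

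The principal obstacle will be the multiplier bookkeeping. Because $k$ is odd, each of $(dz+v)^{k/2}$, $i^{k/2}$, $(-i)^{k/2}$, and $\vep_v^k$ is a half-integral power that requires a definite branch, and matching the form stated in the lemma involves identities such as $\vep_v\vep_{-v} = i$ for odd $v$ and reciprocity relations between the Kronecker symbols $\kron{d}{v}$ and $\kron{v}{d}$. In the $2\|d$ and odd $d$ cases there is an additional layer coming from the slash-operator cocycle when combining the $\Gamma_0(4)$-multiplier of $\gamma''$ with the scaling multiplier of $\sigma_\hf$ or $\sigma_0$; reconciling these with Shimura's conventions for the extended Kronecker symbol is the most delicate part of the computation.
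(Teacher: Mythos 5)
Your overall strategy is the paper's own: realize $\Lambda(\f,\tfrac{u}{d},s)$ as a Mellin integral over the vertical line above $u/d$, pull it back to the vertical line above $\pm v/d$ via $y\mapsto 1/(d^2y)$ and an explicit $\SL_2$ matrix, and absorb the $\Gamma_0(4)$ part using \eqref{modular1} and the cusp normalizations \eqref{modular2}, \eqref{modular3}. Your treatment of the case $4\mid d$ is essentially identical to the paper's (which uses $\bsm -u & e \\ -d & v\esm$ with $uv-de=-1$ rather than your $v\mapsto -v$ substitution at the end) and the multiplier bookkeeping there, including $\vep_v\vep_{-v}=i$, is sound.

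There is, however, a genuine gap in the cases $2\| d$ and $2\nmid d$: your factorization puts the matrices in the wrong order. Writing $\gamma=\sigma_\hf\gamma''$ with $\gamma''=\sigma_\hf^{-1}\gamma\in\Gamma_0(4)$ and applying \eqref{modular2} at the point $\gamma''z_0$ yields a true but useless identity, because $\gamma''z_0=\sigma_\hf^{-1}(iy+u/d)$ does \emph{not} traverse a vertical line: its real part varies with $y$ (the path runs from the cusp $\tfrac{1}{2}$ at $y=\infty$ to the cusp $\tfrac{u}{2u+d}$ at $y=0$), so ``reparameterising by the imaginary part of $\gamma''z_0$'' cannot produce $\Lambda(\f_\hf,\tfrac{v}{d},1-s)$. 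What is needed is the reverse order: the $\Gamma_0(4)$ element must sit \emph{leftmost} (act last), so that after peeling off its automorphy factor via \eqref{modular1} one is left with $\f(\sigma_\hf w)$ at $w=\tfrac{v}{d}+\tfrac{i}{d^2y}$, genuinely on a vertical line, and only then does \eqref{modular2} convert this to $\f_\hf(w)$; this is precisely what the paper's identities \eqref{id2} and \eqref{id3} arrange, with $v$ chosen by $uv\equiv -1 \pmod d$ (resp. $4uv\equiv-1\pmod d$) exactly so that the leftmost factor lands in $\Gamma_0(4)$. Your order could be salvaged by the additional observation that $\sigma_\hf\gamma''\sigma_\hf^{-1}\in\Gamma_0(4)$ as well, so that $\f_\hf$ is itself automorphic under $\gamma''$ with a computable multiplier, but you have not supplied that step. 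A minor further point: for odd $d$ the symbol $\kron{v}{d}$ falls out directly from the multiplier $\vep_d^{-1}\kron{4v}{d}=\vep_d^{-1}\kron{v}{d}$ of the matrix $\bsm e & u \\ 4v & d\esm\in\Gamma_0(4)$; no appeal to quadratic reciprocity is required.
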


\begin{proof}
First note that 
\beq
\mellin{iy+\frac{u}{d}}{s + \kf} = \mellin{\bpm 1& u/d \\ 0 &1 \epm iy}{s+\kf}.
\eeq
We observe that $u/d$ is equivalent to to the cusps $\infty, 1/2$, or $0$ depending on the conditions  $4|d, 2\|d$ or $2\nmid d$ respectively. We consider the following matrix decompositions in each case. 
If $4|d$,
\beq
\bpm 1& u/d \\ 0 & 1\epm \bpm 4/d& 0 \\ 0 & d \epm = \bpm -u &e\\ -d & v \epm  \bpm 1& v/d \\ 0 &1 \epm \bpm 0 & -1 \\ 4 & 0 \epm  \label{id1},
\eeq
and if $2\| d$, 
\beq
 \bpm 1& u/d \\ 0 & 1\epm \bpm 4/d& 0 \\ 0 & d \epm = \bpm 2e-u &e\\ 2v-d & v \epm  \bpm 1&0 \\ -2&1 \epm \bpm 1& v/d \\ 0 &1 \epm \bpm 0 & -1 \\ 4 & 0 \epm  \label{id2},
\eeq
where $v$ and $e$ are chosen to satisfy $uv - de = -1$.
Finally for $d$ odd, with $v$ and $e$ chosen to satisfy $4uv - de = -1$,  we consider the following matrix decomposition:
\beq
\bpm 1& u/d \\ 0 & 1\epm \bpm 1/d& 0 \\ 0 & d \epm = \bpm e & u\\ 4v & d \epm  \bpm 0&1/4 \\ -1&0 \epm \bpm 1& v/d \\ 0 &1 \epm \bpm 0 & -1 \\ 4 & 0 \epm. \label{id3}
\eeq
Note that in all of the matrix decompositions, the leftmost matrix is in $\Gamma_0(4)$.

Recall that we let $s'=s+\kf$. For $4|d$ we use \eqref{id1},
\begin{align}
\Lambda &\lt(\f,\frac{u}{d},s\rt) 
= \mellin{iy + \frac{u}{d}}{s+\kf} \notag \\
&= \mellin{\bpm 1 & u/d \\ 0&1 \epm \bpm 4/d&0\\ 0 & d \epm \frac{id^2y}{4}}{s'} \notag \\
&= 4^{\sp}d^{-2\sp} \mellin{ \bpm -u &e\\ -d & v \epm  \bpm 1& v/d \\ 0 &1 \epm \bpm 0 & -1 \\ 4 & 0 \epm iy}{s'} \notag\\
%&=4^{\sp}d^{-2\sp} \mellin{ \bpm -u &e\\ -d & v \epm  \bpm 1& v/d \\ 0 &1 \epm \frac{-1}{4iy}}{s + \kf} \notag \\
%&=d^{-2\sp} \mellin{ \bpm -u &e\\ -d & v \epm  \bpm 1& v/d \\ 0 &1 \epm iy}{-s - \kf} \notag\\
&=d^{-2\sp} \mellin{\bpm -u & e \\ -d &v \epm \lt( iy + \frac{v}{d} \rt)}{-s'} \notag \\
&=d^{-2\sp} \int_0^\infty \!\!\! \f|_{[\xi]_k} \lt( iy + \frac{v}{d} \rt) \vep_v^{-k} \kron{-d}{v} \lt(-d\lt(iy+\frac{v}{d}\rt) + v\rt)^{\kt} y^{-s'} \dy \notag \\
%&=d^{-2\sp} \int_0^\infty \!\!\! \f|_{[\xi]_k} \lt( iy + \frac{v}{d} \rt) \vep_v^{-k} \kron{-d}{v} (-diy)^{\kt} y^{-s - \kf} \dy \notag \\
%&=d^{\kt - 2\sp}(-i)^{\kt} \vep_v^k \kron{d}{v}  \mellin{iy + \frac{v}{d}}{\frac{k}{2} - s - \kf} \notag \\
&=d^{1-2s}(-i)^\kt \vep_v^k \kron{d}{v} \mellin{iy + \frac{v}{d}}{1 - s + \kf},
 \end{align}
where $\xi=\lt(\gamma, j(\gamma,z)\rt)$ for $\gamma =  \bsm -u & e \\ -d & v \esm$ and $\f|_{[\xi]_k}$ denotes the slash operator on half-integral weight forms as described in \eqref{modular1}. Thus, for $4|d$,

\beq \label{funceqfour}
\Lam{}{u}{d}{s} = d^{1-2s}(-i)^\kt \vep_v^k\kron{d}{v} \Lam{}{v}{d}{1-s}.
\eeq

If $2||d$, once again let $v$ and $e$ satisfy $uv - de = -1$.  By \eqref{id2} and similar reasoning as above, and using \eqref{modular1} and \eqref{modular2}, we deduce:

\begin{align}
\Lambda &\lt(\f,\frac{u}{d},s\rt) = \mellin{iy + \frac{u}{d}}{s + \kf} \notag \\
%&= \mellin{\bpm 1 & u/d \\ 0&1 \epm \bpm 4/d&0\\ 0 & d \epm \frac{id^2y}{4}}{s + \kf} \notag \\
%&= 4^{\sp} d^{-2\sp} \mellin{ \bpm 2e-u &e\\ 2v-d & v \epm \bpm 1& 0 \\ -2 &1 \epm  \bpm 1& v/d \\ 0 &1 \epm \frac{-1}{4iy}}{s + \kf} \notag \\
&= d^{-2\sp} \mellin{ \bpm 2e-u &e\\ 2v-d & v \epm \bpm 1& 0 \\ -2 &1 \epm  \bpm 1& v/d \\ 0 &1 \epm iy}{-s'} \notag \\
&=d^{-2\sp} \int_0^\infty \!\!\! \f|_{[\xi]_k}|_{\lt[\bsm 1 & 0 \\ -2 &1 \esm \rt]_k} \lt( iy + \frac{v}{d} \rt) \vep_v^{-k} \kron{2v-d}{v}^k (-diy)^{\kt} y^{-s'} \dy \notag \\
&=d^{1-2s}(-i)^{\kt} \vep_v^k \kron{d}{v}\int_0^\infty \!\!\! \f_{\hf} \lt(iy + \frac{v}{d}\rt)y^{1 - s + \kf} \dy 
\end{align}
where, this time, $\gamma= \bsm 2e-u & e \\ 2v-d & v \esm$, and $\xi = \lt( \gamma, j(\gamma,z) \rt)$. Thus for $2||d$, 
\beq
\Lam{}{u}{d}{s} = d^{1-2s}(-i)^\kt \vep_v^k\kron{d}{v} \Lam{\hf}{v}{d}{1-s}.
\eeq

For $d$ odd, we choose $v,e$ such that $4uv-de=-1$. So by \eqref{id3},  \eqref{modular1} and \eqref{modular3} we have

\begin{align}
\Lambda&\lt(\f,\frac{u}{d},s\rt)= \mellin{iy + \frac{u}{d}}{s+\kf} \notag \\
%&= \mellin{\bpm 1 & u/d \\ 0&1 \epm \bpm 1/d&0\\ 0 & d \epm id^2y}{s + \kf} \notag \\
%&= d^{-2\sp} \mellin{\bpm e &u\\ 4v & d \epm \bpm 0 & 1/4 \\ -1 & 0 \epm  \bpm 1& v/d \\ 0 &1 \epm \bpm 0 & -1 \\ 4 & 0 \epm iy}{s +\kf} \notag\\
%&=d^{-2\sp} \mellin{ \bpm e &u\\ 4v & d \epm \bpm 0 & 1/4 \\ -1 & 0 \epm  \bpm 1& v/d \\ 0 &1 \epm \frac{-1}{4iy}}{s + \kf} \\
%&=(2d)^{-2\sp} \mellin{\bpm e &u\\ 4v & d \epm \bpm 0 & 1/4 \\ -1 & 0 \epm  \bpm 1& v/d \\ 0 &1 \epm  iy}{-s- \kf}\notag \\
&=(2d)^{-2\sp} \mellin{\bpm e &u\\ 4v & d \epm \bpm 0 & -1 \\ 4 & 0 \epm \lt( iy +\frac{v}{d}\rt)}{-s'} \notag \\
&=(2d)^{-2\sp} \int_0^\infty \!\!\! \f|_{[\xi]_k}|_{\lt[\bsm 0 & -1 \\ 4& 0 \esm\rt]}\lt(iy+\frac{v}{d}\rt)\vep_d^{-k}\kron{v}{d}(-2i)^\kt(diy)^\kt y^{-s'} \dy  \notag \\
&=(2d)^{1-2s} \vep_d^{-k} \kron{v}{d} \int_0^\infty \!\!\! \f_0\lt(iy+\frac{v}{d}\rt) y^{1-s + \kf} \dy 
\end{align}
where $\xi = \lt(\gamma, j(\gamma,z)\rt)$ and $\gamma = \bsm e & u \\ 4v & d \esm$. Thus for $2\nmid d$, 
\beq
\Lam{}{u}{d}{s} = (2d)^{1-2s}\,\vep_d^{-k} \kron{v}{d} \Lam{0}{v}{d}{1-s},
\eeq
which completes our proof.
\end{proof}

We now apply our functional equations to the double sum 
\beq
\addcond{d|r^2}{} \addcond{\substack{(u,d)=1\\ \runmod{u}{d}}}{ \Lambda(\f, \frac{u}{d}, s)}
\eeq
 from \eqref{main} in order to get an asymptotic bound for $D_r(s)$ at $\Re(s) < 0$ in terms of $r$. We first split the sum into appropriate parts.

\bal 
\addcond{d|r^2}{} &\addcond{\substack{(u,d)=1\\ \runmod{u}{d}}}{ \Lambda(\f, \frac{u}{d}, s)} \notag \\
& = \overbrace{\addcond{\substack{d|r^2\\ 4|d}}{} \addcond{\substack{(u,d)=1\\ \runmod{u}{d}}}{ \Lambda(\f, \frac{u}{d}, s)}}^{S_\infty} 
+ \overbrace{\addcond{\substack{d|r^2\\ 2||d}}{} \addcond{\substack{(u,d)=1\\ \runmod{u}{d}}}{ \Lambda(\f, \frac{u}{d}, s)}}^{S_\hf} 
+ \overbrace{\addcond{\substack{d|r^2\\ 2\nmid d}}{} \addcond{\substack{(u,d)=1\\ \runmod{u}{d}}}{ \Lambda(\f, \frac{u}{d}, s)}}^{S_0}. \label{butcher}
\end{align}
In this expression, $d$ can be assumed cube-free, since $r$ can be taken to be square-free and $d$ ranges over $d|r^2$.

Now we estimate this sum for $s$ slightly to the left of the line $\Re(s) = 0$. From \eqref{polyb} we have
\beq 
\Lam{*}{v}{d}{1- (-\vep)+i\tau} =O_{\vep,\f}((1+|\tau|)^{\frac{k}{4}+\vep}e^{-\frac{\pi}{2}|\tau|}),
\eeq
 where the implied constant is uniform over all $v$ and $d$, but is dependent on $\vep$ and $\f$. Using this along with Lemma \ref{class} and \eqref{butcher}, we get that, for $\vep>0$,

\begin{align}
\addcond{d|r^2}{} &\addcond{\substack{(u,d)=1\\ \runmod{u}{d}}}{ \Lambda\lt(\f, \frac{u}{d},-\vep+i\tau\rt)}\notag \\ 
& \ll_{\vep,\f} (1+|\tau|)^{\frac{k}{4}+\vep}e^{-\frac{\pi}{2}|\tau|}\sum_{d|r^2} \varphi(d) d^{1 - (-2\vep)} \notag \\
& \ll_{\vep,\f}(1+|\tau|)^{\frac{k}{4}+\vep}e^{-\frac{\pi}{2}|\tau|}\sigma_{2+2\vep}(r^2)\notag\\
& \ll_{\vep,\f}(1+|\tau|)^{\frac{k}{4}+\vep}e^{-\frac{\pi}{2}|\tau|}r^{4+5\vep}.
\end{align}
Thus using this estimate in \eqref{main},
\beq
D_r(-\vep+i\tau) \ll_{\vep,\f} (1+|\tau|)^{1+2\vep}r^{2+5\vep} .
\eeq 
Using this along with Lemma \ref{oneoverrsq}, a Phragm\'{e}n-Lindel\"{o}f argument tells us that 
\beq 
D_r(3/4+\vep+i\tau) \ll_{\vep,\f,\tau} 1/r^{1 + 4\vep}
\eeq 
which, when put to use in \eqref{first}, provides
\beq
M(s) = \sum_{\textrm{ square-free}}\frac{a(t)}{t^s} 
= \addtoinf{r}{\mu(r)} D_r(s) \ll_{\vep,\f,\tau}  \addtoinf{r}\frac{1}{r^{1+4\vep}} < \infty,
\eeq
where $s=\sigma+i\tau$ with $\sigma >3/4+\vep$. 
Therefore, we have proven the following:
\begin{prop} \label{analcont}
The series 
\beq
M(s) = \sum_{t} \frac{a(t)}{t^s}
\eeq
converges in the half plane $\Re(s)>\frac{3}{4}$ and also has only polynomial growth in $\im(s)$ in the vertical strips in that region.
\end{prop}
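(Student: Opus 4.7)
The plan is to prove Proposition \ref{analcont} by combining the M\"{o}bius decomposition of $M(s)$ from \eqref{first} with the cusp-by-cusp functional equations of Lemma \ref{class}, and interpolating the resulting endpoint bounds via a Phragm\'{e}n-Lindel\"{o}f convexity argument in the strip $-\vep \leq \Re(s) \leq 1+\vep$.

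The first step is to observe that equation \eqref{main} realizes $D_r(s)$ as a finite linear combination of the entire functions $\Lambda(\f, u/d, s)$ with $d \mid r^2$, so $D_r(s)$ is entire and it is legitimate to bound it on arbitrary vertical lines. On the right edge $\sigma = 1+\vep$, Lemma \ref{oneoverrsq} directly provides $D_r(1+\vep+i\tau) \ll r^{-2}$ uniformly in $\tau$. On the left edge $\sigma = -\vep$, I would apply Lemma \ref{class} to each summand of \eqref{main}, splitting the inner sum according to whether $4 \mid d$, $2 \| d$, or $2 \nmid d$. In every case the functional equation expresses $\Lambda(\f, u/d, -\vep+i\tau)$ as $d^{1+2\vep - 2i\tau}$ (up to unimodular factors) times $\Lambda(\f_*, v/d, 1+\vep - i\tau)$ for the appropriate $\f_* \in \{\f, \f_\hf, \f_0\}$. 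Since $\Re(1+\vep) > 1$ lies in the region of absolute convergence of the Fourier expansion used in \eqref{polyb}, these rotated integrals are bounded, uniformly in $v/d$, by $(1+|\tau|)^{k/4+\vep} e^{-\pi|\tau|/2}$ coming from the Stirling estimate on the gamma factor in \eqref{polyb}.

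Collecting the pieces: the prefactor $(2\pi)^{s'}/\Gamma(s')$ in \eqref{main} at $s=-\vep+i\tau$ has exponential $\tau$-growth of order $e^{\pi|\tau|/2}$ by Stirling, which exactly cancels the $e^{-\pi|\tau|/2}$ emerging from the Mellin integral and leaves only polynomial $\tau$-growth. The divisor sum $\sum_{d \mid r^2} \varphi(d) d^{1+2\vep} \leq \sigma_{2+2\vep}(r^2) \ll r^{4+O(\vep)}$ combines with the $r^{-2}$ prefactor of \eqref{main} to yield $D_r(-\vep+i\tau) \ll_{\vep,\f} (1+|\tau|)^{1+O(\vep)}\, r^{2+O(\vep)}$. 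Phragm\'{e}n-Lindel\"{o}f applied to the entire function $D_r$ on the strip $-\vep \leq \sigma \leq 1+\vep$ then interpolates the exponent in $r$ linearly: at $\sigma = 3/4 + \vep$ the convexity weights are approximately $1/4$ on the left and $3/4$ on the right, so the interpolated exponent in $r$ becomes $\tfrac{1}{4}(2+O(\vep)) + \tfrac{3}{4}(-2) = -1 - c\vep$ for some $c>0$. Thus $D_r(3/4+\vep+i\tau) \ll r^{-1-c\vep}(1+|\tau|)^N$ for some $N$, and summing over $r$ against $\mu(r)$ produces absolute convergence of $M(3/4+\vep+i\tau)$ together with the claimed polynomial growth in $\tau$.

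The main obstacle I anticipate is the tightness of the Phragm\'{e}n-Lindel\"{o}f margin: the raw divisor sum forces an $r$-exponent of $4+O(\vep)$ at $\sigma=-\vep$, so the interpolated exponent at $\sigma = 3/4+\vep$ lands only barely below $-1$, and the various $O(\vep)$ contributions in the divisor bound and the convexity weights must all cooperate in the correct direction. A secondary concern is verifying that the exponential growth of $1/\Gamma(s')$ on the left line is cancelled precisely by the exponential decay of the shifted Mellin integral, so that genuinely polynomial $\tau$-growth survives and Phragm\'{e}n-Lindel\"{o}f is applicable as stated; this rests on Stirling together with the fact that the two gamma contributions sit at conjugate-shifted points of a common vertical line.
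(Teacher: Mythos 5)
Your proposal follows the paper's own proof essentially step for step: the M\"{o}bius decomposition into the entire functions $D_r(s)$, the right-edge bound from Lemma \ref{oneoverrsq}, the left-edge bound at $\sigma=-\vep$ via Lemma \ref{class} together with the divisor sum $\sum_{d\mid r^2}\varphi(d)d^{1+2\vep}\ll r^{4+O(\vep)}$ and the cancellation of the exponential factors from $1/\Gamma(\sp)$ against the shifted Mellin integrals, and finally Phragm\'{e}n--Lindel\"{o}f interpolation giving an $r$-exponent just below $-1$ at $\sigma=3/4+\vep$. The argument is correct, and your identified delicate point (the interpolated exponent landing only barely below $-1$) is exactly the margin the paper relies on.
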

This was the desired pole-free region to prove Theorem \ref{mainthm}.

\section{Acknowledgments}
We would like to thank Jeff Hoffstein for introducing us to this problem and for suggesting how we might approach it.

\end{document}